\newtheorem{theorem}{Theorem}[section]
\newtheorem{lemma}[theorem]{Lemma}
\newtheorem{proposition}[theorem]{Proposition}
\newcommand{\R}{\mathbb{R}}
\newcommand{\beq}{\begin{equation}}
\newcommand{\eeq}{\end{equation}}
\newcommand{\beqq}{\begin{equation*}}
\newcommand{\eeqq}{\end{equation*}}
\theoremstyle{definition}
\newtheorem{claim}[theorem]{Claim}
\theoremstyle{remark}
\newtheorem{remark}[theorem]{Remark}
\numberwithin{equation}{section}
\DeclareDocumentCommand{\abs}{s m}{
  \operatorname{}
  \IfBooleanTF{#1}{#2}{\left|#2\right|}}
\DeclareDocumentCommand{\norm}{s m}{
  \operatorname{}
  \IfBooleanTF{#1}{#2} {\left\| #2\right\|}}
\DeclareDocumentCommand{\inner}{s m}{
  \operatorname{}
  \IfBooleanTF{#1}{#2}{\left \langle#2\right \rangle}}
\DeclareDocumentCommand{\parenthese}{s m}{
  \operatorname{}
  \IfBooleanTF{#1}{#2}{\left(#2\right)}}
\DeclareDocumentCommand{\square}{s m}{
  \operatorname{}
  \IfBooleanTF{#1} {#2}{\left[#2\right]}}
\DeclareDocumentCommand{\bracket}{s m}{
  \operatorname{}
  \IfBooleanTF{#1}{#2}{\left\{#2\right\}}}
\numberwithin{equation}{section}
\begin{document}

\address{Xueying  Yu
\newline \indent Department of Mathematics, University of Washington\indent 
\newline \indent  C138 Padelford Hall Box 354350, Seattle, WA 98195,\indent }
\email{xueyingy@uw.edu}

\address{Haitian Yue
\newline \indent Institute of Mathematical Sciences, ShanghaiTech University\newline\indent
Pudong, Shanghai, China.}
\email{yuehaitian@shanghaitech.edu.cn}

\address{Zehua Zhao
\newline \indent Department of Mathematics and Statistics, Beijing Institute of Technology,
\newline \indent MIIT Key Laboratory of Mathematical Theory and Computation in Information Security,
\newline \indent  Beijing, China. \indent}
\email{zzh@bit.edu.cn}

\title[On the decay property of the cubic fourth-order Schr\"odinger equation]{On the decay property of the cubic fourth-order Schr\"odinger equation}
\author{Xueying Yu, Haitian Yue and Zehua Zhao}
\maketitle

\begin{abstract}
In this short paper, we prove that the solution of the cubic fourth-order Schr\"odinger equation (4NLS) on $\mathbb{R}^d$ ($5 \leq d \leq 8$) enjoys the same (pointwise) decay property as its linear solution does. This result is proved via a bootstrap argument based on the corresponding global result Pausader \cite{Pau1}. This result can be extended to more general dispersive equations (including some more 4NLS models) with scattering asymptotics.

\bigskip

\noindent \textbf{Keywords}: fourth-order Schr\"odinger equation, dispersive estimate, long time behavior
\bigskip

\noindent \textbf{Mathematics Subject Classification (2020)} Primary: 35Q55; Secondary: 35R01, 37K06, 37L50.

\end{abstract}

\setcounter{tocdepth}{1}
\tableofcontents

\parindent = 10pt     
\parskip = 8pt

\section{Introduction}
\subsection{Statement of main results}
We consider the cubic, defocusing fourth-order Schr\"odinger (4NLS) initial value problem as follows
\begin{equation}\label{maineq}
    (i\partial_t+\Delta^{2}_{\mathbb{R}^d})u = \mu u|u|^2, \quad u(0,x)=u_0(x) \in H^{d+\epsilon}(\mathbb{R}^d),
\end{equation} 
where $\mu=-1$, $5\leq d \leq 8$.
\begin{remark}
 Here we note that the $d+\epsilon$-regularity requirement for the initial data is necessary for our main result due to technical reasons, and if one considers the global existence or scattering behavior, it is natural to consider $H^2$ data which is consistent with the energy space. See \cite{Pau1}.
\end{remark}
If one considers the linear solution to \eqref{maineq} (letting $\mu=0$), the following decay estimate holds, (see \cite{BKS} for more details)
\begin{equation}\label{linear}
   \|u(t)\|_{L_{x}^{\infty} (\R^d)} \lesssim t^{-\frac{d}{4}} \|u_0\|_{L^1_x (\R^d) }. 
\end{equation}
We also note that after interpolating with the mass conservation law, one obtains
\begin{equation}
   \|u(t)\|_{L_{x}^{p} (\R^d)} \lesssim t^{-\frac{d}{4}(1-\frac{2}{p})} \|u_0\|_{L^{\frac{p}{p-1}}_x (\R^d) }, 
\end{equation}
where $p>2$. The goal of this paper is to show the nonlinear solution to \eqref{maineq}  ($\mu=-1$) also satisfies the decay property
\begin{equation}\label{eq 1.2}
   \|u(t)\|_{L_{x}^{\infty}(\R^d)} \lesssim_{\textmd{data}}  t^{-\frac{d}{4}}, 
\end{equation}
where the constant depends on the initial datum. Now we present the main theorem.
\begin{theorem}\label{mainthm}
Consider $u$ satisfies \eqref{maineq} with initial data $u_0 \in L^1_x \cap H^{d+\epsilon}_x (\R^d)$ for any $\epsilon>0$. Then, there exists a constant $C_{u_0}$ depending on the initial profile $u_0$ such that for $t>0$
\begin{equation}
   \|u(t)\|_{L_{x}^{\infty} (\R^d)} \leq C_{u_0} t^{-\frac{d}{4}}. 
\end{equation}
\end{theorem}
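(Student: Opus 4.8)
The plan is to run a bootstrap on the weighted quantity $M(T):=\sup_{1\le t\le T} t^{d/4}\|u(t)\|_{L^\infty_x}$, feeding in the two global outputs of Pausader's theorem \cite{Pau1}: (i) the solution is global and scatters, so the space-time Strichartz norms of $u$ are finite on all of $\R\times\R^d$; and (ii) by persistence of regularity along a scattering solution, $\sup_{t}\|u(t)\|_{H^{d+\epsilon}_x}\le C_{u_0}<\infty$. For $0<t\le 1$ there is nothing to prove: the Sobolev embedding $H^{d+\epsilon}_x\hookrightarrow L^\infty_x$ together with (ii) gives $\|u(t)\|_{L^\infty_x}\lesssim 1\lesssim t^{-d/4}$, so the real content is the regime $t\ge 1$.

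For $t\ge1$ I would start from the Duhamel formula $u(t)=e^{it\Delta^2}u_0 \pm i\int_0^t e^{i(t-s)\Delta^2}(u|u|^2)(s)\,ds$ and split the time integral into a far piece $[0,t/2]$, an intermediate piece $[t/2,t-1]$, and a near-diagonal piece $[t-1,t]$. The linear term is controlled directly by \eqref{linear}, namely $\|e^{it\Delta^2}u_0\|_{L^\infty_x}\lesssim t^{-d/4}\|u_0\|_{L^1_x}$. On the far piece one has $t-s\sim t$, so the $L^1_x\to L^\infty_x$ dispersive estimate pulls out a factor $t^{-d/4}$ and leaves $\int_0^{t/2}\|(u|u|^2)(s)\|_{L^1_x}\,ds=\int_0^{t/2}\|u(s)\|_{L^3_x}^3\,ds$, which is bounded by the global $L^3_{t,x}$ Strichartz norm from (i); this contributes $\lesssim t^{-d/4}$ with an absolute constant. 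The intermediate piece is handled the same way, except one must keep the kernel $(t-s)^{-d/4}$: combining the pointwise-in-time decay $\|u(s)\|_{L^3_x}^3\lesssim s^{-d/4}$ with $\int_1^{t/2}\tau^{-d/4}\,d\tau\lesssim 1$ (convergent since $d/4>1$ for $5\le d\le 8$) again yields $\lesssim t^{-d/4}$.

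Two inputs must be justified for this to be legitimate: the pointwise decay $\|u(s)\|_{L^3_x}\lesssim s^{-d/12}$ and the near-diagonal piece. For the former I would first establish $L^r_x$-decay $\|u(t)\|_{L^r_x}\lesssim t^{-\frac d4(1-2/r)}$ for a range $2\le r\le r_0$ with $3<r_0<\frac{2d}{d-4}$, by a preliminary bootstrap using only the $L^{r'}_x\to L^r_x$ dispersive estimate; the point is that for such $r$ the near-diagonal exponent $\frac d4(1-2/r)$ is strictly less than $1$, so the $[t/2,t]$ time integral converges and no endpoint obstruction arises. That preliminary bootstrap closes because scattering makes the relevant Strichartz norms on $[T_*,\infty)$ arbitrarily small for $T_*$ large, so the cubic source terms carry small coefficients. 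Note $\frac{2d}{d-4}>3$ exactly when $d<12$, so a valid $r_0>3$ exists throughout $5\le d\le 8$.

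The main obstacle is the near-diagonal piece $\int_{t-1}^t e^{i(t-s)\Delta^2}(u|u|^2)(s)\,ds$: here $t-s$ ranges down to $0$ and the kernel $(t-s)^{-d/4}$ is not integrable once $d\ge4$, so the full $L^1_x\to L^\infty_x$ decay simply cannot be used, and this is precisely where the high regularity $H^{d+\epsilon}_x$ enters. Since $e^{i\tau\Delta^2}$ is an isometry on $H^\sigma_x$ and $H^\sigma_x\hookrightarrow L^\infty_x$ for $\sigma>d/2$, I would bound this piece by $\int_{t-1}^t\|(u|u|^2)(s)\|_{H^\sigma_x}\,ds$ and then apply a fractional-Leibniz/H\"older estimate of the form $\|u|u|^2\|_{H^\sigma_x}\lesssim \|u\|_{W^{\sigma,p_0}_x}\,\|u\|_{L^\infty_x}\,\|u\|_{L^p_x}$, placing the differentiated factor in the (bounded) $W^{\sigma,p_0}_x$ norm controlled by (ii) and the remaining factors in $L^\infty_x$ and in a decaying $L^p_x$ norm ($p\in(2,r_0]$) from the preliminary step. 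This produces a contribution of the shape $M(T)\,t^{-\delta}$ with $\delta=\frac d4(1-2/p)>0$, so that for $t$ beyond a large threshold $T_*$ it is absorbed into $\tfrac12 M(T)$, while on the compact range $[1,T_*]$ one uses the a priori boundedness from (ii). Collecting the linear, far, intermediate, and near estimates then gives $M(T)\le C_{u_0}$ uniformly in $T$, which is the claim. The delicate points to get right are the choice of H\"older exponents in this near-diagonal nonlinear estimate, so that the differentiated factor stays within $H^{d+\epsilon}_x$ while the other factors land in the already-controlled decaying range, and checking that the preliminary $L^r_x$ decay indeed reaches some $r_0>3$ for every $5\le d\le 8$.
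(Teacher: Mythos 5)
Your overall skeleton (a bootstrap on $\sup_t t^{d/4}\|u(t)\|_{L^\infty_x}$, Duhamel with a three-way time splitting, high regularity reserved for the near-diagonal piece) matches the paper's, but the engine you use to run it --- a preliminary bootstrap giving $\|u(t)\|_{L^r_x}\lesssim t^{-\frac d4(1-2/r)}$ for $2\le r\le r_0$ --- has a gap that is fatal at $d=8$. To close that preliminary bootstrap at exponent $r_0$ you must estimate $\||u|^2u\|_{L^{r_0'}_x}$, and any H\"older splitting of the cubic term into factors $\|u\|_{L^a_x}$ with $a$ in the controlled range $[2,r_0]$ forces $1-\frac{1}{r_0}=\frac{1}{r_0'}\ge\frac{3}{r_0}$, i.e.\ $r_0\ge 4$; equivalently, $4$ is the fixed point of the map $r\mapsto 3r/(r-1)$, so no decay cascade can self-consistently live strictly below exponent $4$. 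Thus the true constraint is $4\le r_0<\frac{2d}{d-4}$, not $r_0>3$: your check ``$\frac{2d}{d-4}>3$ iff $d<12$'' tests the wrong threshold. For $d=5,6,7$ this is repairable, since $\frac{2d}{d-4}>4$ there: run the bootstrap at a single $r_0\in[4,\frac{2d}{d-4})$ and recover the rest of the range by interpolating with conserved mass. But for $d=8$ one has $\frac{2d}{d-4}=4$ exactly, so the closure condition $r_0\ge 4$ and the kernel-integrability condition $\frac d4(1-\frac{2}{r_0})<1$ (i.e.\ $r_0<4$) are incompatible; inserting the scattering norm ($L^6_{t,x}$ when $d=8$) as one H\"older factor does not rescue this, since spending its $L^6_t$ integrability via H\"older in $s$ tightens the kernel condition to $r_0<\frac{24}{7}$ while the remaining spatial exponents then force $r_0>4$. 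Since both your intermediate piece (which needs $\|u(s)\|^3_{L^3_x}\lesssim s^{-d/4}$) and your near-diagonal piece (which needs $\|u(s)\|_{L^p_x}$ decay for some $p>2$) rest on this preliminary step, your argument does not cover $d=8$, which lies inside the theorem's range. A secondary error: for $d<8$ the global $L^3_{t,x}$ norm is not ``a Strichartz norm from (i)'' --- it sits at negative regularity $|\nabla|^{(d-8)/6}$ in the Strichartz scale, and its finiteness is morally equivalent to the decay you are proving --- though your pointwise-decay fix makes this moot wherever that fix is available.

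The paper avoids intermediate Lebesgue-norm decay altogether, which is exactly why its argument is uniform over $5\le d\le 8$. On the middle interval it bounds $\||u|^2u\|_{L^1_x}\le\|u\|_{L^2_x}^2\|u\|_{L^\infty_x}$ --- conserved mass times the bootstrap hypothesis itself, hence linear in the bootstrap constant $C_{u_0}$ --- and extracts smallness from $\int_M^\infty s^{-d/4}\,ds$ with $M$ large, using only $d/4>1$. On the near-diagonal interval it interpolates (Gagliardo--Nirenberg) between $\|F_3\|_{L^2_x}$ and $\|F_3\|_{H^{d+\epsilon}_x}$, H\"oldering the $L^2$ bound so that a power $\frac{2(d+\epsilon)}{d+2\epsilon}$ (just under $2$) of $\|u\|_{L^\infty_x}$ appears together with a small power of the scattering-norm tail $\delta$; the interpolation exponents are rigged so that $C_{u_0}$ enters with power exactly one and $\delta$ supplies the smallness, with all parameters chosen independently of $C_{u_0}$ so no circularity arises. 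If you want to keep your architecture, replace your middle and near estimates by these two mechanisms; the preliminary-decay route can survive only as an alternative treatment for $d\le 7$.
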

\begin{remark}
We again emphasize that the requirement of $(d+\epsilon)$ regularity of initial data is due to technical reasons. See Section \ref{sec main} when we work on the main estimates (especially the estimate of  $F_3$-term). It would very interesting to ask if one could relax the regularity requirement on data. Moreover, it is natural to consider the data in $L^1$ since we discuss the dispersive estimate.
\end{remark}
\begin{remark}
One may consider other dispersive models (including more 4NLS models) and try to prove the analogue of Theorem \ref{mainthm}. We will discuss more in Section \ref{sec rmk}.
\end{remark}
The  result above can be regarded as the `\emph{nonlinear decay property}', i.e. the nonlinear solution of a dispersive equation enjoys the same (pointwise) decay property as its linear solution does. Such results are often based on the global existence result of the initial value problem and some assumptions posed on the initial data are needed (higher regularity or finite invariance). We note that \cite{Pau1} has proved that \eqref{maineq} is global well-posed and scattering. Moreover, the scattering norm (a spacetime norm)  $L_{t,x}^{\frac{2(d+4)}{d-4}}$ of the solution $u(t)$ is finite. Heuristically, scattering means the nonlinear solution behaves like a linear solution asymptotically. Thus for a dispersive model with scattering property, it is natural to study if the nonlinear solution decays pointwise like its linear solution. We will explain more motivations and background in the next subsection.

\subsection{Background}
Fourth-order Schr\"odinger (4NLS) equations  have been introduced by Karpman \cite{Karpman} and Karpman and Shagalov \cite{KS} to investigate the stabilizing role of higher-order dispersive effects for the soliton instabilities (e.g. the finite time blowup).
The following work \cite{FIP} by Fibich, Ilan and Papanicolaou studies the  self-focusing (i.e. finite time blowup in dimension two) and singularity formation of such 4NLS equations  from the mathematical viewpoint.
More precursory research works on the basic properties of 4NLS can be found in \cite{BKS,GW2002,guo2016scattering,HHW1, HHW2, Segata}.
It is worth mentioning that the defocusing energy-critical 4NLS in dimension eight was first proved in the series of work by Pausader \cite{Pau2, Pau1}
and then the higher dimension cases ($d\geq 9$) are handled by Miao, Xu and Zhao \cite{MXZ2}.
For more developments on 4NLS, we refer to 
\cite{MWZ,MXZ1, MZ,Pau3,PS,PX,Zheng} in Euclidean spaces.

The 4NLS equations  (similar as nonlinear Schr\"odinger equations) are called dispersive equations because their linear parts are dispersive. The dispersive effect for the linear fourth-order Schr\"odinger equation makes the initial data spread out by the evolution while the mass of the linear solution is conserved, and hence the size of the linear solution decays for large time $t$ as in \eqref{eq 1.2}.
In this paper, we will prove that this dispersive decay also holds for the nonlinear equation. We refer to a recent result \cite{fan2021decay} on the NLS analogue and the references therein for more discussions on the research line of nonlinear decay property for dispersive equations.

The current paper proves this property for a model other than NLS. This is one motivation for us to write this paper. Another motivation is that the authors recently studied the scattering theory for 4NLS on waveguide manifolds where showing decay property is a crucial step. (See Remark A.3. in \cite{yu2021global}.) One can show the decay property in the sense of $\|u(t,x)\|_{L_x^{r}} \rightarrow 0$ as $t \rightarrow \infty$ for some $r>2$ which is sufficient to show the scattering for a subcritical model. Thus we are curious if one can obtain the (stronger) \emph{pointwise} decay estimate as the NLS case. We study the Euclidean case first and we expect the analogous result for the waveguide case,

\subsection{The strategy of the proof}
Our philosophy of studying nonlinear decay property is: if a dispersive model (for example, NLS) is scattering, one can show persistence of regularity and spacetime bounds, i.e. the control of scattering norm (in many cases, people show the finiteness of scattering norm first which implies scattering. See \cite{Taobook}). Then one can prove the nonlinear decay property via the bootstrap argument by controlling the nonlinearity.

Following this idea,  we first recall the scattering result for \eqref{maineq} in \cite{Pau1}. Then we show the persistence of regularity of \eqref{maineq} (see Section \ref{sec overview}). This step is standard (see Lemma 3.2 in \cite{Iteam1} for a NLS analogue). Then we decompose the solution (into the linear solution and the nonlinear evolution  via Duhamel formula) and estimate the nonlinearity in order to apply the bootstrap argument to show the nonlinear decay result, i.e. Theorem \ref{mainthm}. Technically, we need to further decompose the nonlinear part into three terms, i.e. $F_1$, $F_2$ and $F_3$ via splitting the time interval and estimate them respectively (see Section 3 for more details).
\subsection{Organization of the rest of this paper}
In Section \ref{sec overview}, we overview the global dynamics for the model which was investigated in \cite{Pau1} and show the persistence of regularity for \eqref{maineq}; in Section \ref{sec main}, we give the proof for the main theorem, i.e. Theorem \ref{mainthm} via the bootstrap argument; in Section \ref{sec rmk}, we make a few remarks on this research line.
\subsection{Notations}
Throughout this note, we use $C$ to denote  the universal constant and $C$ may change line by line. We say $A\lesssim B$, if $A\leq CB$. We say $A\sim B$ if $A\lesssim B$ and $B\lesssim A$. We also use notation $C_{B}$ to denote a constant depends on $B$. We use the usual $L^{p}$ spaces and $L^2$-based Sobolev spaces $H^{s}$.
\subsection{Acknowledgment}
X. Yu was funded in part by an AMS-Simons travel grant. H. Yue was supported by a start-up funding of ShanghaiTech University. Z. Zhao was supported by the NSF grant of China (No. 12101046) and the Beijing Institute of Technology Research Fund Program for Young Scholars. Part of this work was done while the first two authors were in residence at the Institute for Computational and Experimental Research in Mathematics in Providence, RI, during the {\it Hamiltonian Methods in Dispersive and Wave Evolution Equations} program. 

\section{Overview of the global result}\label{sec overview}

We recall known results on the cubic 4NLS model \eqref{maineq} in \cite{Pau1}. We summarize the main result of \cite{Pau1} (global well-posedness and scattering) by the following proposition.

\begin{proposition}[Theorem 1.1 in \cite{Pau1}]
Let $1 \leq d \leq 8$. For any $u_0 \in H^2 (\R^d)$, there exists a global solution $u \in C(\R , H^2 (\R^d))$ of \eqref{maineq} with initial datum $u(0) = u_0$. If $5 \leq d \leq 8$, the global solution also scatters in $H^2 (\R^d)$. That is, there exist
$f^{\pm} \in H^2(\mathbb{R}^d)$ such that
\begin{equation*}
\lim_{t \rightarrow \pm \infty} \norm{u(t,x)-e^{it\Delta^2}f^{\pm}}_{H^2 (\mathbb{R}^d)} =0.
\end{equation*}
Moreover, the global solution is bounded in the following  spacetime norm 
\begin{align*}
\norm{u}_{L_{t,x}^{\frac{2(d+4)}{d-4}} (\R \times \R^d)} < \infty .
\end{align*}
\end{proposition}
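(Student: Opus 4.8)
The plan is to follow the standard three-stage route for a defocusing, energy-(sub)critical dispersive equation: local theory, a priori global control from the conservation laws, and a global spacetime bound that yields scattering. First I would develop the local well-posedness and stability theory. The dispersive bound underlying \eqref{linear}, $\|e^{it\Delta^2}\|_{L^1_x\to L^\infty_x}\lesssim |t|^{-d/4}$, feeds the Keel--Tao machinery to produce a full family of Strichartz estimates for the biharmonic propagator. A contraction mapping argument in the resulting Strichartz spaces, carried out at the scaling-critical regularity $s_c=\frac{d}{2}-2$ (so that $s_c=2$ precisely when $d=8$), gives local well-posedness in $H^2(\R^d)$ together with a perturbation/stability lemma: two solutions with comparable data and a small forcing error stay close in the scattering norm $L^{\frac{2(d+4)}{d-4}}_{t,x}$.

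Second, I would use the two conserved quantities -- the mass $\|u(t)\|_{L^2_x}^2$ and the defocusing energy $E(u)=\frac12\|\Delta u\|_{L^2_x}^2+\frac14\|u\|_{L^4_x}^4$, whose potential term carries the favorable sign and makes $E$ coercive -- to control $\|u(t)\|_{H^2_x}$ uniformly in $t$. For $5\le d\le 7$ the problem is energy-subcritical, so the local existence time depends only on the $H^2$ norm and this a priori bound upgrades the local solution to a global one by iteration. When $d=8$ the problem is energy-critical, the uniform energy bound no longer fixes a positive local time of existence, and global existence must instead be extracted together with the global spacetime bound of the next step.

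Third, and this is the crux, I would establish $\|u\|_{L^{\frac{2(d+4)}{d-4}}_{t,x}(\R\times\R^d)}<\infty$; scattering then follows by a routine argument, showing $e^{-it\Delta^2}u(t)$ is Cauchy in $H^2$ as $t\to\pm\infty$ by estimating the Duhamel integral against the now-finite scattering norm. The driving input is a Morawetz-type estimate adapted to $\Delta^2$ -- most cleanly an interaction Morawetz inequality -- that quantifies the dispersion and rules out mass concentration; the hypothesis $d\ge 5$, i.e. $s_c>0$ (the mass-supercritical regime), is exactly what lets such an estimate close, and it is why scattering is claimed only in this range. In the subcritical band $5\le d\le 7$ this can be combined directly with persistence of regularity to conclude, whereas for the critical case $d=8$ I would run the concentration-compactness/rigidity scheme of Kenig--Merle: were the scattering norm infinite, a profile decomposition for the biharmonic Strichartz inequality would produce a minimal, almost-periodic critical element, which the Morawetz estimate then forces to vanish, a contradiction.

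The main obstacle lies in this third stage, specifically in producing a usable monotonicity (Morawetz/virial) formula for the fourth-order operator. The commutator and integration-by-parts computations with $\Delta^2$ are considerably heavier than for $-\Delta$, generating higher-order and non-sign-definite bulk and boundary contributions whose coercivity is not automatic. Compounding this, the profile decomposition needed for $d=8$ must faithfully track the scaling and the translation/modulation symmetries attached to the symbol $|\xi|^4$, which disperses differently from the Schr\"odinger symbol $|\xi|^2$; organizing these profiles and their nonlinear interactions correctly is the delicate part of the critical-case argument.
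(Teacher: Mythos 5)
The paper does not prove this proposition at all---it is quoted as Theorem 1.1 of Pausader \cite{Pau1} and used as a black box, so the only meaningful comparison is with that cited work. Your outline reproduces essentially the strategy of \cite{Pau1}: biharmonic Strichartz estimates and local/stability theory at $s_c=\frac{d}{2}-2$, coercivity of mass and energy from the defocusing sign, an interaction Morawetz-type estimate whose validity forces $d\geq 5$, and a Kenig--Merle concentration-compactness/rigidity argument at the energy-critical dimension $d=8$; this is the same approach as the source's proof.
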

We note that the above result is a little more general (since it concerns a larger range of dimensions) and in this work we only need to focus on the $5\leq d \leq 8$ case when the scattering behavior occurs. Furthermore, we can show persistence of regularity for \eqref{maineq} as follows.

\begin{lemma}[Persistence of regularity]\label{lem PR}
Let $k \in \Bbb N$, $I$ be a compact time interval, and let $u$ be a finite energy solution to \eqref{maineq} on $I \times \R^d$ obeying the bounds
\begin{align*}
\norm{u}_{L_{t,x}^{\frac{2(d+4)}{d-4}} (I \times \R^d)} \leq M.
\end{align*}
Then, if $t_0 \in I$ and $u(t_0) \in H^k (\R^d)$, 
\begin{align}\label{eq Sbdd}
\norm{u}_{\dot{S}^k (I \times \R^d)} \leq C (M , M(u)) \norm{u(t_0)}_{\dot{H}^k (\R^d)}. 
\end{align}
\end{lemma}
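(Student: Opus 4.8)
The plan is to run the standard subdivision-and-iteration argument built on Strichartz estimates for the biharmonic propagator $e^{it\Delta^2}$. I first fix notation for the Strichartz spaces: let $\dot S^0(I)$ denote the supremum of $\norm{\cdot}_{L^q_t L^r_x(I\times\R^d)}$ over all pairs $(q,r)$ admissible for $e^{it\Delta^2}$ (the fourth-order analogue of Schr\"odinger admissibility, coming from the $t^{-d/4}$ dispersive bound \eqref{linear} via the $TT^*$ method; see \cite{Pau1}), put $\norm{u}_{\dot S^k(I)} := \norm{|\nabla|^k u}_{\dot S^0(I)}$, and let $N(I)$ be the dual Strichartz space in which one estimates the Duhamel term. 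With this notation the homogeneous and inhomogeneous Strichartz inequalities read $\norm{u}_{\dot S^k(I)} \lesssim \norm{u(t_0)}_{\dot H^k} + \norm{|\nabla|^k F}_{N(I)}$ whenever $u$ solves $(i\partial_t + \Delta^2) u = F$ on $I$.

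Because $\norm{u}_{L^{2(d+4)/(d-4)}_{t,x}(I\times\R^d)}\le M$, I would partition $I = \bigcup_{j=1}^{J} I_j$ into $J = J(M,\eta)$ consecutive subintervals on each of which $\norm{u}_{L^{2(d+4)/(d-4)}_{t,x}(I_j\times\R^d)} < \eta$, where $\eta$ is a small parameter to be chosen. On a single $I_j$, writing $t_j$ for its left endpoint and applying the Strichartz inequality to the Duhamel representation of $u$ with $F = \mu |u|^2 u$, the main point is the nonlinear estimate $\norm{|\nabla|^k(|u|^2u)}_{N(I_j)} \lesssim \norm{u}_{X(I_j)}^2\,\norm{|\nabla|^k u}_{\dot S^0(I_j)}$, obtained from the fractional Leibniz (Kato--Ponce) rule: the derivative block $|\nabla|^k$ is placed on one factor and estimated in a Strichartz norm, while the remaining two factors are measured in an auxiliary norm $X$ that interpolates between the scattering norm $L^{2(d+4)/(d-4)}_{t,x}$ and the conserved mass $\norm{u}_{L^\infty_t L^2_x} = M(u)^{1/2}$. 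Choosing the H\"older exponents so that the scattering norm appears to a positive power, one gets $\norm{u}_{X(I_j)}^2 \lesssim \eta^{\alpha} M(u)^{\beta}$ for suitable $\alpha>0$, and taking $\eta$ small (depending on $M(u)$) lets me absorb the highest-derivative term into the left-hand side. This yields $\norm{|\nabla|^k u}_{\dot S^0(I_j)} \le 2\,\norm{|\nabla|^k u(t_j)}_{L^2}$ on each subinterval.

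Finally I would iterate. Since $\norm{|\nabla|^k u(t_{j+1})}_{L^2} \le \norm{|\nabla|^k u}_{\dot S^0(I_j)} \le 2\,\norm{|\nabla|^k u(t_j)}_{L^2}$, a discrete Gronwall/telescoping over the $J$ subintervals gives $\norm{|\nabla|^k u}_{\dot S^0(I)} \lesssim 2^{J}\,\norm{|\nabla|^k u(t_0)}_{L^2}$. Because $J$ depends only on $M$ and (through $\eta$) on $M(u)$, the accumulated constant is exactly of the form $C(M, M(u))$, which is \eqref{eq Sbdd}. The passage from $t_0 \in I$ to an arbitrary endpoint, and from $\dot S^0$ back to the stated $\dot S^k$ norm, is immediate once $u(t_0)\in H^k$ provides the finiteness needed to start the bootstrap.

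The step I expect to be the main obstacle is the nonlinear estimate with $k$ derivatives: one must run the fractional Leibniz rule on the cubic term and choose a set of space-time exponents that are simultaneously (i) admissible for the biharmonic Strichartz estimates on the factor carrying $|\nabla|^k$, (ii) dual-admissible for $N(I_j)$, and (iii) compatible with placing the other two factors in an interpolant of the scattering norm and the mass. Extracting a genuinely positive power of the (small) scattering norm from this bookkeeping --- so that absorption is possible --- is the delicate part, and it is exactly where the argument forces dependence on both $M$ and $M(u)$; fractional-derivative product estimates of non-integer order would add the usual technical overhead, though here $k\in\mathbb N$ keeps the Leibniz expansion elementary.
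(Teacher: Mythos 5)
Your proposal follows essentially the same argument as the paper: subdivide $I$ into finitely many intervals on which the scattering norm $L^{2(d+4)/(d-4)}_{t,x}$ is small, apply Strichartz plus a Leibniz/H\"older estimate on the cubic nonlinearity to absorb the $\nabla^k u$ term on each subinterval, and then iterate across the subintervals to accumulate a constant depending only on $M$ and $M(u)$. The only cosmetic difference is that the paper closes the H\"older bookkeeping by interpolating the two low factors between the scattering norm and the $\dot S^0$ norm (writing $\norm{u}^{(d-4)/2}_{L^{2(d+4)/(d-4)}_{t,x}}\norm{u}^{4-d/2}_{\dot S^0}$), while you interpolate against the conserved mass $L^\infty_t L^2_x$ directly; both are valid and yield the same conclusion.
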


\begin{proof}[Proof of Lemma \ref{lem PR}]
We first divide the time interval $I$ into $N$ subintervals  $I_j : = [T_j ,T_{j+1}]$ such that $I = \cup_{j=1}^N I_j $ and on each $I_j$
\begin{align*}
\norm{u}_{L_{t,x}^{\frac{2(d+4)}{d-4}} (I_j \times \R^d)} \leq \delta ,
\end{align*}
where $\delta$ will be chosen later. We have on each $I_j$ by the Strichartz estimates,
\begin{align*}
\norm{u}_{\dot{S}^k (I_j \times \R^d)} & \leq \norm{u(T_j)}_{\dot{H}^k (\R^d)} + C  \norm{\nabla^k (\abs{u}^2 u)}_{L_t^1 L_x^2 (I_j \times \R^d)}\\
& \leq \norm{u(T_j)}_{\dot{H}^k (\R^d)}  + C \norm{\nabla^k u}_{\dot{S}^0 (I_j \times \R^d)}  \norm{u}_{L_{t,x}^{\frac{2(d+4)}{d-4}} (I_j \times \R^d)}^{\frac{d-4}{2}} \norm{u}_{\dot{S}^0 (I_j \times \R^d)}^{4-\frac{d}{2}}\\
& \leq \norm{u(T_j)}_{\dot{H}^k (\R^d)}  +  C \delta^{\frac{d-4}{2}} \norm{\nabla^k u}_{\dot{S}^0 (I_j \times \R^d)}   \norm{u}_{\dot{S}^0 (I_j \times \R^d)}^{4-\frac{d}{2}} ,
\end{align*}
where the constant $C$ might vary from line to line.

Choosing $\delta$ small enough, we obtain the bound
\begin{align}\label{eq SSbdd}
\norm{u}_{\dot{S}^k (I_j \times \R^d)} \leq 2 C \norm{u(T_j)}_{\dot{H}^k (\R^d)} . 
\end{align}
Then the bound \eqref{eq Sbdd} follows by adding up the bounds \eqref{eq SSbdd} we have on each subinterval.
\end{proof}

\section{Proof of the main theorem}\label{sec main}
In this section, we give the proof for Theorem \ref{mainthm}. We will adapt the scheme as in \cite{fan2021decay} with suitable modifications for the 4NLS case. One main difference is that. 

We define 
\begin{equation}
A(\tau):=\sup_{0 \leq s\leq \tau} s^{\frac{d}{4}}\|u(s)\|_{L_{x}^{\infty} (\R^d)}.
\end{equation}
Note that $A(\tau)$ is monotone increasing.
We intend to prove that there exists some constant, depending on $u_{0}$, so that 
\begin{equation}
A(\tau)\leq C_{u_{0}}, \quad \text{ for any } \tau \geq 0.
\end{equation}
Recall we have persistence of regularity, thus for any given $l$, one can find $C_{l}$ so that,
\begin{equation}
A(\tau)\leq C_{l}, \quad \text{ for any }  0\leq \tau\leq l,
\end{equation} 
and the solution is continuous  in time in $L^{\infty}$ since we are working on high regularity data.

Thus, Theorem \ref{mainthm} follows from the following bootstrap lemma.
\begin{lemma}\label{lem: bootstrap}
There exists a constant $C_{u_{0}}$, such that if one has $A(\tau)\leq C_{u_{0}}$, then for $\tau \geq 0$, one can improve the bound by $A(\tau)\leq \frac{C_{u_{0}}}{2}$  for any $\tau 
\geq 0$.
\end{lemma}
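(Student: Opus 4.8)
The plan is to run the bootstrap of Lemma~\ref{lem: bootstrap} by representing the solution through Duhamel's formula and testing the nonlinear term against the very rate $t^{-d/4}$ that we are trying to propagate. Writing
\[
u(t) = e^{it\Delta^2}u_0 - i\mu\int_0^t e^{i(t-s)\Delta^2}\bigl(\abs{u}^2u\bigr)(s)\,ds,
\]
the linear piece is handled immediately by the dispersive bound \eqref{linear}, which gives $\norm{e^{it\Delta^2}u_0}_{L^\infty_x}\lesssim t^{-d/4}\norm{u_0}_{L^1_x}$; after multiplication by $t^{d/4}$ it contributes the fixed constant $C\norm{u_0}_{L^1_x}$ to the bound for $A(\tau)$. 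Everything then reduces to showing that $t^{d/4}$ times the Duhamel integral is at most a small multiple of $A(\tau)$ plus a fixed, data-dependent constant.

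First I would split $\int_0^t$ into the three pieces $F_1,F_2,F_3$ over $[0,T_0]$, $[T_0,t/2]$, and $[t/2,t]$, where $T_0$ is a large threshold fixed using the global theory. On $F_1$ and $F_2$ the elapsed time satisfies $\abs{t-s}\gtrsim t$, so the $L^1_x\to L^\infty_x$ dispersive estimate applies with the favorable weight $\abs{t-s}^{-d/4}\sim t^{-d/4}$, and $\norm{(\abs{u}^2u)(s)}_{L^1_x}=\norm{u(s)}_{L^3_x}^3$. On $F_1$ the interval is compact, so Lemma~\ref{lem PR} with Sobolev embedding bounds $\norm{u(s)}_{L^3_x}$ by a data-dependent constant, and $F_1$ contributes a fixed constant. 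On $F_2$ I would interpolate $\norm{u(s)}_{L^3_x}^3\lesssim\norm{u(s)}_{L^2_x}^2\norm{u(s)}_{L^\infty_x}\lesssim M(u)\,A(\tau)\,s^{-d/4}$ using mass conservation and the bootstrap hypothesis; since $d\geq5$ makes $\int_{T_0}^\infty s^{-d/4}\,ds$ not only finite but small for $T_0$ large, this term is bounded by $\varepsilon\,A(\tau)$ with $\varepsilon$ as small as desired. It is precisely the finiteness of the scattering norm from \cite{Pau1}, funneled through Lemma~\ref{lem PR}, that guarantees the tail is genuinely small rather than merely finite, and this is what ultimately produces the contraction factor in the intermediate regime.

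The delicate term is $F_3$, the near-diagonal region $s\in[t/2,t]$, and I expect this to be the main obstacle. Here $\abs{t-s}$ is small, so the dispersive estimate produces the weight $\abs{t-s}^{-d/4}$ which is \emph{non-integrable} in $s$ exactly because $d/4\geq1$ for $d\geq5$; dispersion is therefore unusable. Instead I would discard the propagator using its unitarity on $L^2$-based Sobolev spaces together with the embedding $H^{\sigma}\hookrightarrow L^\infty$ for $\sigma>d/2$, so that $\norm{e^{i(t-s)\Delta^2}(\abs{u}^2u)(s)}_{L^\infty_x}\lesssim\norm{(\abs{u}^2u)(s)}_{H^{\sigma}}\lesssim\norm{u(s)}_{H^{\sigma}}\norm{u(s)}_{L^\infty_x}^2$ by the fractional Leibniz rule. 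The first factor must be controlled uniformly in $s$ by Lemma~\ref{lem PR}, and it is exactly this step that forces the high-regularity hypothesis on $u_0$ (the paper fixes the threshold at $H^{d+\epsilon}$). The two $L^\infty_x$ factors supply the decay $A(\tau)^2 s^{-d/2}$, and integrating over $[t/2,t]$ (length $\sim t$) gives $\int_{t/2}^t s^{-d/2}\,ds\sim t^{1-d/2}$, whence $t^{d/4}F_3\lesssim A(\tau)^2\,t^{1-d/4}$, the exponent $1-d/4$ being negative precisely because $d\geq5$. To keep the dependence on $A(\tau)$ from being quadratically obstructive, I would further peel off the last unit interval $[t-1,t]$ (treated by this Sobolev bound, contributing the fast-decaying $A(\tau)^2 t^{-d/4}$) from $[t/2,t-1]$, on which $\abs{t-s}^{-d/4}$ is again integrable and the dispersive estimate returns a term linear in $A(\tau)$ whose coefficient is made small by the scattering-norm tail once $T_0$ is large.

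Finally I would close the bootstrap. For $\tau\leq 2T_0$ the bound $A(\tau)\leq C_l$ already follows from persistence of regularity and continuity of $u$ in $L^\infty_x$, so only $t\geq 2T_0$ needs the Duhamel analysis, where assembling the pieces yields a bound of the schematic form $A(\tau)\leq C\norm{u_0}_{L^1_x}+K_1+\varepsilon A(\tau)+(\text{small})\,A(\tau)^2$. I would fix $T_0$ large enough that $\varepsilon<\tfrac14$ and that the quadratic coefficient (carrying the large-time gain $t^{1-d/4}$ or $t^{-d/4}$) is small, then choose $C_{u_0}$ large enough to dominate the fixed constants while keeping the quadratic and linear corrections subordinate, so that the hypothesis $A(\tau)\leq C_{u_0}$ self-improves to $A(\tau)\leq C_{u_0}/2$. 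The quantitatively subtle point is balancing the data-dependent regularity constant from Lemma~\ref{lem PR} against the gains coming from $T_0$ and from the large-time decay; this is exactly where the interplay of the range $5\leq d\leq8$, the scattering information, and the careful treatment of the $F_3$-term is indispensable.
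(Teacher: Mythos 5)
Your treatment of $F_1$ and the intermediate region matches the paper's in spirit (the paper uses $[0,M]$, $[M,t-M]$, $[t-M,t]$ and chooses $M$ large so that $\int_M^\infty s^{-d/4}\,ds$ is small -- note this smallness is pure calculus from $d/4>1$, not a consequence of the scattering norm as you suggest). The genuine gap is in your treatment of the near-diagonal piece $F_3$, and it is fatal as written. Your Sobolev-embedding bound produces a term \emph{quadratic} in $A(\tau)$, schematically $\kappa\,A(\tau)^2$ with $\kappa \sim K\,T_0^{1-d/4}$ (or $K M' T_0^{-d/4}$ after peeling). To close a bootstrap with a quadratic term you need $\kappa\, C_{u_0}\le \tfrac16$, but $C_{u_0}$ must dominate the small-time bound $A(2T_0)$, and before the theorem is proved the only a priori control on $A(2T_0)$ is $A(2T_0)\lesssim K (2T_0)^{d/4}$ (persistence of regularity plus Sobolev). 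The product $\kappa\,C_{u_0}$ then scales like $K^2 M'$ -- the powers of $T_0$ cancel exactly -- so no choice of $T_0$, $M'$ makes it small for large data; the smallness you need and the growth of the constant term are tied to the same parameter. Your proposed repair does not help: on $[t/2,t-1]$ the dispersive estimate with mass conservation gives a linear-in-$A(\tau)$ term whose coefficient is $M(u)\int_1^\infty \sigma^{-d/4}\,d\sigma$, a fixed order-one constant; the scattering norm simply does not appear in that estimate, so the coefficient cannot be "made small by the scattering-norm tail once $T_0$ is large." Enlarging the peeled interval to shrink that integral just pushes the problem back into the quadratic term.

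The idea you are missing is the one the paper builds the $F_3$ estimate around: get the smallness from the scattering norm $\delta$ (via \eqref{eq: scatteringdecay}, choosing $L$ large) while keeping the bound \emph{exactly linear} in $C_{u_0}$ with \emph{exactly} $t^{-d/4}$ decay. The paper does this by estimating two different norms of $F_3$ and interpolating: $\|F_3\|_{H^{d+\epsilon}_x}\le MM_1^3$ by the algebra property and unitarity, and $\|F_3\|_{L^2_x}$ by H\"older with exponents arranged so that the bootstrap factor $\|u\|_{L^\infty_x}$ enters with power $\frac{2(d+\epsilon)}{d+2\epsilon}<2$ and the scattering norm enters with a small positive power (hence a factor $\delta^{+}$). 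The Gagliardo--Nirenberg inequality (Lemma \ref{lem: ele2}) then gives $\|F_3\|_{L^\infty_x}\le \|F_3\|_{L^2_x}^{\frac{d+2\epsilon}{2(d+\epsilon)}}\|F_3\|_{H^{d+\epsilon}_x}^{\frac{d}{2(d+\epsilon)}}$, and the exponents multiply out so that $C_{u_0}t^{-d/4}$ appears to the first power, with a $\delta^{+}$ prefactor that is killed by taking $L$ large (independently of $C_{u_0}$). This bookkeeping is also the true reason for the $H^{d+\epsilon}$ hypothesis: it is needed so that the $L^2$-exponent in Gagliardo--Nirenberg exceeds $1/2$ by just enough that $(2-)\times(\tfrac12+)=1$, not merely to have some $H^\sigma\hookrightarrow L^\infty$ with $\sigma>d/2$ as your proposal asserts.
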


\begin{proof}[Proof of Lemma \ref{lem: bootstrap}]

One important mission in this lemma is to choose a suitable $C_{u_0}$, and we will see the reason of choice of $C_{u_0}$ in the proof later. 
For fixed $\tau$, we only need to prove that for any $t\leq \tau$, one has
\begin{equation}\label{eq: goal}
\|u(t)\|_{L_{x}^{\infty} (\R^d)}\leq \frac{C_{u_{0}}}{2}t^{-\frac{d}{4}}.
\end{equation}
We recall here, by bootstrap assumption, we can apply the following estimates in the proof
\begin{equation}\label{eq: boosassu}
\|u(t)\|_{L_{x}^{\infty} (\R^d)}\leq C_{u_{0}}t^{-\frac{d}{4}}.
\end{equation}
 Observe, for any $\delta$, we can choose $L$, so that for one has 
\begin{equation}\label{eq: scatteringdecay}
\big( \int_{L/2}^{\infty}\|u\|^{\frac{2(d+4)}{d-4}}_{L_{x}^{\frac{2(d+4)}{d-4}} (\R^d)}\, dt \big)^{\frac{d-4}{2(d+4)}} \leq \delta.
\end{equation}

We will first fix two special constants $\delta, L$ in the proof,  though the exact   choices of these two parameters will only be made clear later.

We will only study $t\geq L$, and estimate all $t\leq L$ directly via
\begin{equation}
\|u(t)\|_{L_{x}^{\infty}}\leq A(L)t^{-3/2}, \quad t\leq L.
\end{equation}

Next, by Duhamel's Formula, we write the nonlinear solution $u(t,x)$ as follows,
\begin{equation}
    u(t,x)=e^{it\Delta^2}u_0+i\int_0^t e^{i(t-s)\Delta^2}(|u|^2u)(s) \, ds : =u_l+u_{nl}.
\end{equation}
Clearly dispersive estimates give for some constant $C_{0}$,
\begin{equation}\label{eq ul}
\|u_{l}(t)\|_{L_{x}^{\infty} (\R^d)}\leq C_{0}t^{-\frac{d}{4}}\|u_{0}\|_{L_{x}^{1} (\R^d)}.
\end{equation} 

Then, we split $u_{nl}$ into 
\begin{equation}
u_{nl}=F_{1}+F_{2}+F_{3}
\end{equation}
where
\begin{equation}
\begin{aligned}
    &F_1(t)=i\int_0^M e^{i(t-s)\Delta^2}(|u|^2u)(s) \, ds,\\
    &F_2(t)=i\int_M^{t-M} e^{i(t-s)\Delta^2}(|u|^2u)(s) \, ds,\\
    &F_3(t)=i\int_{t-M}^t e^{i(t-s)\Delta^2}(|u|^2u)(s) \, ds.\\
    \end{aligned}
\end{equation}

\begin{claim}\label{claim}
We claim that 
\begin{enumerate}
\item
$\|F_{1}(t)\|_{L_{x}^{\infty}(\R^d)} \lesssim MM_{1}^{3}t^{-\frac{d}{4}}$;
\item
$\|F_{2}(t)\|_{L^{\infty}_x (\R^d)} \leq \frac{1}{6}C_{u_{0}}t^{-\frac{d}{4}}$;
\item
$\|F_{3}(t)\|_{L_{x}^{\infty}(\R^d)}\leq \frac{1}{6}C_{u_{0}}t^{-\frac{d}{4}}$.
\end{enumerate}

\end{claim}

Assuming Claim \ref{claim}, let us continue the bootstrap argument. 

For all $t\leq \tau$, assuming $A(\tau)\leq C_{u_{0}}$, we derive
\begin{itemize}
\item For $t\leq L$, we have 
\begin{equation}
u(t)\leq A(L)t^{-\frac{d}{4}}.
\end{equation}
\item For $L\leq t\leq \tau$, by Claim \ref{claim}, we write 
\begin{equation}
u(t)\leq \parenthese{C(\|u_{0}\|_{L_{x}^{1}(\R^d)}+MM_{1}^{3})+\frac{1}{6}C_{u_{0}}+\frac{1}{6}C_{u_{0}}}t^{-\frac{d}{4}}.
\end{equation}
\end{itemize}

Thus, if one chooses
\begin{equation}
C_{u_{0}}:=2A(L)+3C(\|u_{0}\|_{L_{x}^{1} (\R^d)}+MM_{1}^{3}),
\end{equation}
then the desired estimates follows, which completes the bootstrap argument in Lemma \ref{lem: bootstrap}.

Now we are left to prove Claim \ref{claim}. 
\begin{proof}[Proof of Claim \ref{claim}]
For {\it (1)},  we estimate $F_{1}$ by 
\begin{equation}\label{eq: esf1}
\begin{aligned}
\|F_{1}(t)\|_{L_{x}^{\infty} (\R^d)}&\leq \int_{0}^{M}\|e^{i(t-s)\Delta^2}|u|^{2}u(s)\|_{L_{x}^{\infty} (\R^d) }\, ds\\
&\lesssim M(t-M)^{-\frac{d}{4}}\sup_{s}\|u^{3}(s)\|_{L_{x}^{1}(\R^d)}\\
&\lesssim Mt^{-\frac{d}{4}}\sup_{s}\|u(s)\|_{H_{x}^{3}(\R^d)}^{3}\\
&\lesssim MM_{1}^{3}t^{-\frac{d}{4}}.
\end{aligned}
\end{equation}
We note that we will choose $M$ satisfying $M<\frac{t}{2}$, then we can bound $(t-M)^{-\frac{d}{4}}$ by $t^{-\frac{d}{4}}$ (multiplying a constant), which has been used in the estimates above. 

For {\it (2)}, we first recall \eqref{eq: scatteringdecay}. Then We consider a pointwise estimate applying the bootstrap assumption, together with the H\"older and the Sobolev inequality as follows 
\begin{equation}\label{eq: pt3}
\|e^{i(t-s)\Delta^2}|u(s)|^{2}u(s)\|_{L_{x}^{\infty}(\R^d)}\lesssim (t-s)^{-\frac{d}{4}}\|u(s)\|^2_{H_x^{2}(\R^d)}  \|u(s)\|_{L_{x}^{\infty}(\R^d)}\lesssim C_{u_{0}}M^2_{1}(t-s)^{-\frac{d}{4}}s^{-\frac{d}{4}}.
\end{equation}
And we can estimate $F_{2}$ via 
\begin{equation}\label{eq: esf2pre3}
\aligned
\|F_{2}(t)\|_{L^{\infty}_x(\R^d)} &\leq CC_{u_{0}}M^2_{1}\int_{M}^{t-M}(t-s)^{-\frac{d}{4}}s^{-\frac{d}{4}}  \, ds\\
&\leq CC_{u_{0}}M^2_{1}\int_{M}^{\frac{t}{2}}(t-s)^{-\frac{d}{4}}s^{-\frac{d}{4}} \,  ds\\
&+CC_{u_{0}}M^2_{1}\int_{\frac{t}{2}}^{t-M}(t-s)^{-1}s^{-\frac{d}{4}} \,  ds \\
&\leq 2CC_{u_{0}}M^2_{1} t^{-\frac{d}{4}}\int_{M}^{\frac{t}{2}}s^{-\frac{d}{4}} \,  ds \\
&+ 2CC_{u_{0}}M^2_{1} t^{-\frac{d}{4}} \int_{\frac{t}{2}}^{t-M}(t-s)^{-\frac{d}{4}} \,  ds. 
\endaligned
\end{equation}
Choosing $M$, so that
\begin{equation}\label{eq: choiceofM3}
4CC_{u_{0}}M^2_{1} t^{-\frac{d}{4}} \big(\int_{M}^{\infty}s^{-\frac{d}{4}} \, ds\big) \leq \frac{1}{6}C_{u_{0}}t^{-\frac{d}{4}}.
\end{equation}
We note that $\frac{d}{4}>1$ since $5 \leq d\leq 8$. Thus eventually we can estimate $F_{2}$ as
\begin{equation}
\|F_{2}(t)\|_{L^{\infty}_x(\R^d)} \leq \frac{1}{6}C_{u_{0}}t^{-\frac{d}{4}}.
\end{equation}

We now turn to the estimate for {\it (3)} (i.e. estimating $F_{3}$). We first state the following lemma,
\begin{lemma}\label{lem: ele2}
 Let $f(x)$ be an $H_x^{d+\epsilon}$ function in $\mathbb{R}^{d}$, with  
 \begin{equation}
 \|f\|_{L_x^{2}(\R^d)}\leq a, \|f\|_{H_x^{d+\epsilon}(\R^d)}\leq b. 
 \end{equation}
 Then one has 
 \begin{equation}
\|f\|_{L_x^{\infty}(\R^d)}\leq  a^{\frac{d+2\epsilon}{2(d+\epsilon)}}b^{\frac{d}{2(d+\epsilon)}}.
 \end{equation}
 \end{lemma}

\begin{proof}[Proof of Lemma \ref{lem: ele2}]
This follows from Gagliardo-Nirenberg inequality (see \cite{gagliardo1959ulteriori,nirenberg2011elliptic}) as below.
\begin{equation}
    \|f\|_{L_x^{\infty}(\R^d)}\lesssim \|  f\|^{\frac{d+2\epsilon}{2(d+\epsilon)}}_{L_x^{2}(\R^d)} \cdot \| |\nabla|^{d+\epsilon} f\|^{\frac{d}{2(d+\epsilon)}}_{L_x^{2}(\R^d)}.
\end{equation}
\end{proof}
\begin{remark}
As shown below, to deal with $F_3$-term using our method, we need assume at least $(d+\epsilon)$-regularity ($\epsilon>0$ is arbitrarily small). A nature interesting question to ask is if one could relax the regularity assumption. For the NLS case (see \cite{fan2021decay}), there are also higher regularity assumptions. It is also interesting to think if one can lower the regularity assumption.
\end{remark}
\begin{remark}
One may compare this part with the NLS case \cite{fan2021decay}. Unlike the NLS case, using $L^{2}$- and $W^{1,\infty}$-norms to control $L^{\infty}$-norm is not sufficient since the spacial dimension is higher ($5 \leq d \leq 8$).
\end{remark}
Following Lemma \ref{lem: ele2}, we estimate the  $L_x^{2}$- and $H_x^{d+\epsilon}$-norms of $F_{3}$.
Note that $H_x^{d+\epsilon}$ is a Banach algebra under pointwise multiplication (since $d+\epsilon>\frac{d}{2}$), and $e^{i(t-s)\Delta^2}$ is unitary in $H_x^{d+\epsilon}$, we directly estimate $\|F_{3}(t)\|_{H_x^{d+\epsilon}}$ as 
\begin{equation}\label{eq: H3}
\|F_{3}(t)\|_{H_x^{d+\epsilon} (\R^d)}\leq MM_{1}^{3}.
\end{equation}

For $ \|F_{3}(t)\|_{L^{2}_{x}}$, we will use the fact $t-M\geq L/2$ and rely on the scattering decay assumption. Also note $t-M\sim t$ since $t\geq L\geq 100M$. Using the H\"older inequality, we estimate as 
\begin{equation}
\begin{aligned}
\big\|\int_{t-M}^t e^{i(t-s)\Delta^2}|u|^2u \, ds \big\|_{L^{2}_x(\R^d)} &\leq \int_{t-M}^t \| |u|^2u \|_{L_x^2(\R^d)} \, ds \\
  &\leq  \int_{t-M}^t \| u \|^{\frac{(4-\epsilon)d+12\epsilon}{4(d+2\epsilon)}}_{L_x^{2}(\R^d)} \| u \|^{\frac{\epsilon d+4\epsilon}{4(d+2\epsilon)}}_{L_x^{\frac{2(d+4)}{d-4}}(\R^d)}\cdot \| u \|^{\frac{2(d+\epsilon)}{d+2\epsilon}}_{L_x^{\infty}(\R^d)} \,  ds \\ 
    &\leq CM_1^{\frac{(4-\epsilon)d+12\epsilon}{4(d+2\epsilon)}}(C_{u_{0}}t^{-\frac{d}{4}})^{\frac{2(d+\epsilon)}{d+2\epsilon}} \int_{t-M}^t \|u \|^{\frac{\epsilon d+4\epsilon}{4(d+2\epsilon)}}_{L_x^{\frac{2(d+4)}{d-4}}(\R^d)} \, ds \\
     &\leq CM_1^{\frac{(4-\epsilon)d+12\epsilon}{4(d+2\epsilon)}}(C_{u_{0}}t^{-\frac{d}{4}})^{\frac{2(d+\epsilon)}{d+2\epsilon}} \cdot \| u \|^{\frac{\epsilon d+4\epsilon}{4(d+2\epsilon)}}_{L^{\frac{2(d+4)}{d-4}}_t L_x^{\frac{2(d+4)}{d-4}} ([t-M, t] \times \R^d) }  \cdot M^{1-\frac{(d-4)(\epsilon d+4\epsilon)}{8(d+4)(d+2\epsilon)}} \\
          &\leq CM_1^{\frac{(4-\epsilon)d+12\epsilon}{4(d+2\epsilon)}}(C_{u_{0}}t^{-\frac{d}{4}})^{\frac{2(d+\epsilon)}{d+2\epsilon}} \delta^{\frac{\epsilon d+4\epsilon}{4(d+2\epsilon)}} M^{1-\frac{(d-4)(\epsilon d+4\epsilon)}{8(d+4)(d+2\epsilon)}}. \\
\end{aligned}
\end{equation}

Thus, via Lemma \ref{lem: ele2}, we derive
\begin{equation}\label{eq: esf3pre}
\aligned
\big\|\int_{t-M}^t e^{i(t-s)\Delta}|u|^2u \, ds \big\|_{L^{\infty}_x(\R^d)} &\leq \big( CM_1^{\frac{(4-\epsilon)d+12\epsilon}{4(d+2\epsilon)}}(C_{u_{0}}t^{-\frac{d}{4}})^{\frac{2(d+\epsilon)}{d+2\epsilon}} \delta^{\frac{\epsilon d+4\epsilon}{4(d+2\epsilon)}} M^{1-\frac{(d-4)(\epsilon d+4\epsilon)}{8(d+4)(d+2\epsilon)}} )^{\frac{d+2\epsilon}{2(d+\epsilon)}} \cdot  (MM_1^3)^{\frac{d}{2(d+\epsilon)}} \\
&\leq C^{\frac{d+2}{2(d+1)}}C_{u_0}\delta^{\frac{\epsilon d+4\epsilon}{4(d+2\epsilon)}} M^{\alpha(d)} M_1^{\frac{(16-\epsilon)d+12\epsilon}{8(d+\epsilon)}} t^{-\frac{d}{4}},
\endaligned
\end{equation}
where $\alpha(d)=1-\frac{(d-4)(\epsilon d+4\epsilon)}{16(d+4)(d+\epsilon)}$. Thus, by choosing $\delta$ small enough, according to $M, M_{1}$, we can ensure
\begin{equation}\label{eq: esf3}
\|F_{3}(t)\|_{L_{x}^{\infty}(\R^d)}\leq \frac{1}{6}C_{u_{0}}t^{-\frac{d}{4}}.
\end{equation}
We note that we choose $L$, depending on $\delta$, so that the above estimate holds .

Here, we mention that, the choice of $M,L$ does not depend on $C_{u_{0}}$. We will choose $C_{u_{0}}$ depending on $M, L$.
\begin{remark}
In addition to Remark 3.3, we explain why we need to assume at least $d+\epsilon$-regularity for estimating $F_3$-term, First, we need the smallness from the finite scattering norm to close the bootstrap argument. As for the cubic nonlinearity (we put it in $L^2$-norm), we can at most spare $2-$ `number' for $L_x^{\infty}$-norm which is for the decay. (We can not do $2$, if so, the left norm would be $L^2$-norm which does not give us any smallness.) Thus in Lemma \ref{lem: ele2}, we need to assume $d+\epsilon$-regularity to make sure the exponent for $\|f\|_{L^2_x(\R^d)}$ is $2+$ then `$2+$'$\times$ `$2-$' $=1$. 
\end{remark} 
Now we finish the proof of Claim \ref{claim}.
\end{proof}

This ends the proof of Lemma \ref{lem: bootstrap}, and   the main theorem follows from Lemma \ref{lem: bootstrap}.

\end{proof}
\section{Further remarks}\label{sec rmk}

In this section, we make a few remarks on this research line. As mentioned in the introduction, our nonlinear decay result for \eqref{maineq} is based on the corresponding scattering result \cite{Pau1}. We believe that this method can be applied for some other dispersive models with proper modifications. We list a few more models (with scattering behavior) and leave the proofs for interested readers. (We note that one may require more regular initial data in the following models.) 

\begin{enumerate}
\item
\begin{equation}
    (i\partial_t+\Delta^{2}_{\mathbb{R}^d})u = -u|u|^{\frac{8}{d}}, \quad u(0,x)=u_0(x) \in L^2(\mathbb{R}^d),
\end{equation} 
where $d \geq 5$. (See \cite{PS})
 
\item
\begin{equation}
    (i\partial_t+\Delta^{2}_{\mathbb{R}^d})u = - u|u|^{\frac{8}{d-4}}, \quad u(0,x)=u_0(x) \in H^2(\mathbb{R}^d),
\end{equation} 
where $d \geq 9$. (See \cite{MXZ2}) 

\item
\begin{equation}
    (i\partial_t+\Delta^{2}_{\mathbb{R}^d})u = \mu u|u|^{d}, \quad u(0,x)=u_0(x) \in H^2(\mathbb{R}^d),
\end{equation} 
where $1\leq d \leq 4, p>1+\frac{8}{d}$. (See \cite{PX}) 

\end{enumerate}

For the focusing case, this method can be applied as well once we have the global spacetime bound since estimating the nonlinearity and using bootstrap argument have nothing to do with the sign of nonlinearity. It is essentially a \emph{perturbative} method. See \cite{guo2016scattering,MXZ1,PS} for examples. We conclude this paper with one more paragraph on discussing more dispersive models as below.  

For dispersive models rather than 4NLS on Euclidean spaces (with scattering behavior), similar method may be applied to obtain the nonlinear decay property (i.e. the nonlinear solutions enjoy the same pointwise decay property as the linear solutions), such as, higher order (more than four) NLS, 4NLS on waveguide manifolds (see \cite{yu2021global} for a recent result), NLS on waveguide manifolds (see \cite{HP,IPRT3,Z1} for example), NLS with partial harmonic potentials (see \cite{antonelli2015scattering,cheng2021scattering,hani2016asymptotic}), resonant system (see \cite{CGZ,yang2018global}), nonlinear wave equations (see \cite{Taobook}),  Klein–Gordon equation (see \cite{Taobook}). We did not list them explicitly.

\bibliography{4NLS}
\bibliographystyle{plain}
\end{document}